\numberwithin{equation}{section}
\begin{document}

\title{Curvature corrected estimates for geodesic arc-length.}
\author{%
Leo Brewin\\[10pt]%
School of Mathematical Sciences\\%
Monash University, 3800\\%
Australia}
\date{14-Oct-2015}
\date{19-Oct-2015}
\reference{Preprint}

\maketitle

\begin{abstract}
\noindent
We will develop simple relations between the arc-lengths of a pair of geodesics that share
common end-points. The two geodesics differ only by the requirement that one is constrained to
lie in a subspace of the parent manifold. We will present two applications of our results. In
the first example we explore the convergence of Gaussian curvature estimates on a simple
triangular mesh. The second example demonstrates an improved error estimate for the area of a
Schwarz lantern.

\end{abstract}

\section{Introduction}
\label{sec:intro}
If $S$ is an $(n-1)$-dimensional subspace of an $n$-dimensional Riemannian space $M$ then $S$
will inherit a metric from its embedding in $M$. Consider a pair of points $p$ and $q$ on $S$
chosen sufficiently close to ensure that there exists a unique geodesic in $S$ connecting $p$
to $q$. If necessary, the points $p$ and $q$ could be further constrained to ensure that a
unique geodesic, this time lying in $M$, also connects $p$ to $q$. A natural question to pose
would be -- how are the arc-lengths of the two geodesics related? This is a simple question
and as shown below rather easy to answer. We will show that the difference between the
arc-lengths is (not surprisingly) controlled by the embedding of $S$ in $M$, that is, the
difference can be expressed solely in terms of the second fundamental form of $S$.

\section{Geodesic arc-length}
\label{sec:geodesiclsq}

\begin{lemma*}
Let $M$ be an $n$-dimensional manifold with a Riemannian metric $g$. Consider an
$(n-1)$-dimensional subspace $S$ of $M$ and let $h$ be the metric induced on $S$ by the
embedding of $S$ in $(M,g)$. Choose a set of coordinates on $M$ and choose the natural
coordinate basis $\partial_\mu$ for the tangent space on points of $M$. Then
\begin{equation}
   \label{eqn:3d4dconn}
   {\tilde\Gamma}^\mu{}_{\alpha\beta}
 = {\bar\Gamma}^\mu{}_{\alpha\beta} + n^\mu K_{\alpha\beta}
\end{equation}
where ${\tilde\Gamma}^\mu{}_{\alpha\beta}$ and ${\bar\Gamma}^\mu{}_{\alpha\beta}$ are the
metric compatible connection components on $S$ and $M$ respectively and where
$K_{\alpha\beta}$ are the components of the extrinsic curvature of $S$.
\end{lemma*}
\begin{proof}
Let $v^\mu$ be the tangent vector to a geodesic of $(S,h)$ and let $\vert$ denote the
covariant derivative with respect to ${\tilde\Gamma}^\mu{}_{\alpha\beta}$. Then
\begin{equation}
   \label{eqn:3dgeod}
   0 = v^\mu{}_{\vert \nu} v^\nu
\end{equation}
However
\begin{equation}
   \label{eqn:3dperp}
   v^\mu{}_{\vert\nu}
   = \bot\left(v^\mu{}_{;\nu}\right)
   = \left(\delta^\mu{}_{\nu} + n^\mu n^\nu\right)\left(v^\mu{}_{;\nu}\right)
\end{equation}
where $;$ denotes the covariant derivative with respect to
${\bar\Gamma}^\mu{}_{\alpha\beta}$. Substituting (\ref{eqn:3dperp}) into (\ref{eqn:3dgeod})
while also using $0=(n_\alpha v^\alpha)_{;\beta}$ and
$2K_{\alpha\beta} = \bot(n_{\alpha;\beta}+n_{\beta;\alpha})$ leads to
\begin{equation}
   0 = v^\mu{}_{;\nu}v^\nu + n^\mu K_{\alpha\beta} v^\alpha v^\beta
\end{equation}
Comparing this equation with (\ref{eqn:3dgeod}) leads directly to (\ref{eqn:3d4dconn}).
\end{proof}
Note that ${\tilde\Gamma}^\mu{}_{\alpha\beta}$ can be viewed as a second connection on $M$
constructed so that it admits geodesics lying solely within $S$.

\begin{theorem*}
Let $M$ be an $n$-dimensional manifold with a Riemannian metric $g$. Consider an
$(n-1)$-dimensional subspace $S$ of $M$ and let $h$ be the metric induced on $S$ by the
embedding of $S$ in $(M,g)$. Consider a pair of nearby points $p$ and $q$ in $S$ chosen so
that they are connected by a pair of unique geodesics, one in $(S,h)$ and the other in
$(M,g)$. Let ${\bar L}(p,q)$ be the arc-length from $p$ to $q$ in $(M,g)$ and likewise let
${\tilde L}(p,q)$ be the arc-length in $(S,h)$. Let $K$ be the second fundamental form for
$S$ and $v$ the unit-tanget to the geodesic in $(M,g)$. Then
\begin{equation}
\label{eqn:main}
{\tilde L}^2(p,q) = {\bar L}^2(p,q)
                  + \frac {1}{12}\left(K(v,v){\bar L}^2(p,q)\right)^2
                  + \BigO{{\bar L}^5}
\end{equation}
\end{theorem*}

\begin{proof}
We begin by constructing a local set of Riemann normal coordinates $x^\mu$ that covers a
subset of $M$ containing the two points $p$ and $q$. The assumption that there is a unique
geodesic in $M$ that connects $p$ and $q$ ensures that such a set of coordinates can be
constructed. We are free to locate the origin of the coordinates to be at $p$ and also to
align the coordinates axes so that one axis is parallel to the geodesic connecting $p$ to
$q$. Let that axis be the $x^1$ axis. Then the coordinates of $p$ are $x_p=(0,0,0...)$ while
for $q$ we have $x_q=(x^1_q,0,0,...)$. The $x^\mu$ do not provide a set of Riemann normal
coordinates on $S$. However in a new set of coordinates $y^\mu$ in the neighbourhood of $p$
given by
\begin{equation}
   \label{eqn:3drncA}
   y^\mu = x^\mu
         + \frac{1}{2}{\tilde\Gamma}^\mu{}_{\alpha\beta} x^\alpha x^\beta
         + \frac{1}{6}\left({\tilde\Gamma}^\mu{}_{\alpha\beta}
                            {\tilde\Gamma}^\beta{}_{\theta\phi}
                           +{\tilde\Gamma}^\mu{}_{\theta\phi,\alpha}
                       \right) x^\alpha x^\theta x^\phi
         + \BigO{L^4}
\end{equation}
it is easy to verify that the connection components at $p$ satisfy
\begin{align}
   0 &= {\Gamma}^\mu{}_{\alpha\beta}\label{eqn:3drncB}\\
   0 &= {\Gamma}^\mu{}_{(\alpha\beta,\rho)\label{eqn:3drncC}}
\end{align}
and thus the $y^\mu$, when restricted to $S$, serve as a set of Riemann normal coordinates
on $S$. In the $y^\mu$ coordinates we have $y_p=(0,0,0\cdots)$ and $y_q=(y^1_q,0,0,...)$.
The choice of Riemann normal coordinates is motived by the following simple expression for
the geodesic arc-length
\begin{equation}
   \label{eqn:lsqGeod}
   L^2(p,q) = g_{\mu\nu} \Delta x^\mu_{pq}\Delta x^\nu_{pq}
            - \frac{1}{3} R_{\mu\alpha\nu\beta} x^\mu_p x^\nu_p x^\alpha_q x^\alpha_q
            + \BigO{L^5}
\end{equation}
where $g_{\mu\nu}=\diag(1,1,1,\cdots)$ and $R_{\mu\alpha\nu\beta}$ are the Riemann curvature
components evaluated at $p$ and where $\Delta x^\mu_{pq} = x^\mu_q - x^\mu_p$. There are two
geodesics to be considered. They both join $p$ to $q$ but one uses the connection
${\tilde\Gamma}^\mu{}_{\alpha\beta}$ while the other uses
${\bar\Gamma}^\mu{}_{\alpha\beta}$. The squared arc-lengths for this pair of geodesics can
be found using (\ref{eqn:lsqGeod}) leading to
\begin{align}
   {\tilde L}^2(p,q) &= g_{\mu\nu} y^\mu_q y^\nu_q\label{eqn:3dlsq}\\
   {\bar L}^2(p,q)   &= g_{\mu\nu} x^\mu_q x^\nu_q\label{eqn:4dlsq}
\end{align}
Now we can combine equations (\ref{eqn:3d4dconn}) and (\ref{eqn:3drncA}) and substitute the
result into (\ref{eqn:3dlsq}) to obtain
\begin{equation}
   {\bar L}^2(p,q) = {\tilde L}^2(p,q)
                   - \frac{1}{4} \left(K_{\mu\nu} x^\mu_q x^\nu_q\right)^2
                   + \frac{1}{3} K_{\mu\nu}K_{\alpha\beta}
                                 x^\mu_q x^\nu_q x^\alpha_q x^\beta_q
                   + \BigO{L^5}
\end{equation}
Now recall that $x_q=(x^1_q,0,0,0\cdots)$ and thus $x_q = {\tilde L}(p,q) v$ where $v$ is a
unit vector from $p$ to $q$. Thus we can re-write the previous equation as
\begin{equation}
   {\bar L}^2(p,q) = {\tilde L}^2(p,q)
                   + \frac{1}{12}\left(K_{\mu\nu} v^\mu v^\nu {\tilde L}^2(p,q)\right)^2
                   + \BigO{L^5}
\end{equation}
which completes the proof.
\end{proof}

\begin{corollary}
   Let $n(p)$ and $n(q)$ be the unit normal vectors on $S$ at points $p$ and $q$
   respectively. Then
   \begin{equation}
      \label{eqn:corollary1}
      {\bar L}^2(p,q) = {\tilde L}^2(p,q)
                      + \frac{1}{12}\left( (n_\mu(p)-n_\mu(q))\Delta x^\mu_{pq}\right)^2
                      + \BigO{L^5}
   \end{equation}
\end{corollary}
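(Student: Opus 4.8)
The plan is to reduce the corollary to the theorem by trading the curvature factor appearing in the result for a difference of unit normals evaluated at the two endpoints. Concretely, I would establish the identity
\begin{equation*}
  \left(n_\mu(p)-n_\mu(q)\right)\Delta x^\mu_{pq}
  = -K(v,v)\,{\bar L}^2(p,q) + \BigO{{\bar L}^3}
\end{equation*}
and then square it. Since the left-hand side is of order ${\bar L}^2$, any $\BigO{{\bar L}^3}$ error is promoted to $\BigO{{\bar L}^5}$ after squaring and so is absorbed into the error term; the sign is irrelevant for the same reason, and the distinction between $\tilde L$ and $\bar L$ inside the quartic correction is immaterial at this order. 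Substituting the squared identity into the form of the result obtained at the end of the proof of the theorem then yields (\ref{eqn:corollary1}) immediately.

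To obtain the displayed identity I would work in the Riemann normal coordinates $x^\mu$ of the theorem's proof, in which ${\bar\Gamma}$ vanishes at $p$ and the geodesic of $(M,g)$ runs along the $x^1$ axis, so that $v^\mu=\delta^\mu_1$ and $\Delta x^\mu_{pq}={\bar L}(p,q)\,v^\mu$ exactly. Taylor expanding the normal components about $p$ and using $\partial_\nu n_\mu(p)=n_{\mu;\nu}(p)$ (valid because ${\bar\Gamma}(p)=0$) gives
\begin{equation*}
  \left(n_\mu(p)-n_\mu(q)\right)\Delta x^\mu_{pq}
  = -{\bar L}^2(p,q)\,n_{\mu;\nu}v^\mu v^\nu + \BigO{{\bar L}^3},
\end{equation*}
so the task reduces to identifying $n_{\mu;\nu}v^\mu v^\nu$ with $K(v,v)$ to leading order.

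The main obstacle is that $v$, being the unit tangent to the geodesic in $(M,g)$, is not exactly tangent to $S$: because both $p$ and $q$ lie on $S$, the normal displacement $n_\mu(p)\Delta x^\mu_{pq}$ is the height of $q$ above the tangent plane at $p$ and is controlled by the second fundamental form, so that $\lambda:=n_\mu v^\mu=\BigO{{\bar L}}$. I would handle this by writing $v^\mu=v_T^\mu+\lambda n^\mu$ with $v_T$ tangent to $S$. Differentiating the normalisation $n_\mu n^\mu=1$ gives $n_{\mu;\nu}n^\mu=0$, which annihilates the normal part in the first slot and leaves $n_{\mu;\nu}v^\mu v^\nu=n_{\mu;\nu}v_T^\mu v^\nu$. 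The relation $2K_{\alpha\beta}=\bot(n_{\alpha;\beta}+n_{\beta;\alpha})$ from the lemma then shows both that $K_{\alpha\beta}n^\alpha=0$, so $K(v,v)=K_{\alpha\beta}v_T^\alpha v_T^\beta$, and that $K_{\alpha\beta}v_T^\alpha v_T^\beta=n_{\alpha;\beta}v_T^\alpha v_T^\beta$; the only residual piece, $\lambda\,n_{\mu;\nu}v_T^\mu n^\nu$, is $\BigO{{\bar L}}$. Hence $n_{\mu;\nu}v^\mu v^\nu=K(v,v)+\BigO{{\bar L}}$, which when multiplied by ${\bar L}^2$ changes the displayed identity only by $\BigO{{\bar L}^3}$. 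Squaring and inserting the result into the theorem completes the argument.
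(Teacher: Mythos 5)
Your proposal is correct and follows essentially the same route as the paper's own proof: Taylor-expand the normal difference at $p$ (where ${\bar\Gamma}$ vanishes), split $v$ into a tangential part plus an $\BigO{{\bar L}}$ normal part, identify the tangential contraction of $n_{\mu;\nu}$ with $K$ via $2K_{\alpha\beta}=\bot\left(n_{\alpha;\beta}+n_{\beta;\alpha}\right)$, and absorb all $\BigO{{\bar L}^3}$ errors into $\BigO{{\bar L}^5}$ after squaring. The only differences are cosmetic --- you keep the minus sign from the Taylor expansion (which the paper drops, harmlessly, since the quantity is squared) and you contract with the unnormalised projection $v_T$ while explicitly using $n_{\mu;\nu}n^\mu=0$, where the paper writes $v^\mu=\alpha{\tilde v}^\mu+\beta n^\mu$ with $\alpha=1+\BigO{L^2}$, $\beta=\BigO{L}$ --- and neither affects the result at the stated order.
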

\begin{proof}
   Since the connection ${\bar\Gamma}^\mu{}_{\alpha\beta}$ vanishes at $p$ we have
   \begin{equation}
      (n_\mu(p)-n_\mu(q))v^\mu = n_{\mu;\nu}\Delta x^\nu_{pq} v^\mu + \BigO{L^2}
   \end{equation}
   But $\Delta x^\nu_{pq} = v^\nu {\bar L(p,q)}$ thus we also have
   \begin{equation}
      (n_\mu(p)-n_\mu(q))v^\mu = n_{\mu;\nu} v^\mu v^\nu {\bar L(p,q)} + \BigO{L^2}
   \end{equation}
   The vector $v^\mu$, which is tangent to the geodesic in $(M,g)$ connecting $p$ to $q$, is
   in general not tangent to the geodesic in $(S,h)$. It can however be written as a linear
   combination of a vectors parallel and perpendicular to $S$ at $p$. That is
   \begin{equation}
      v^\mu = \alpha {\tilde v}^\mu + \beta n^\mu
   \end{equation}
   where $\alpha$ and $\beta$ are numbers yet to be determined and ${\tilde v}^\mu$ is the
   unit tangent vector to the geodesic in $S$. It is clear that when $p$ and $q$ are close
   then $\alpha=1 + \BigO{L^2}$ and $\beta = \BigO{L}$. Substituting this into the previous
   equation leads to
   \begin{equation}
      (n_\mu(p)-n_\mu(q))v^\mu = n_{\mu;\nu} {\tilde v}^\mu {\tilde v}^\nu {\bar L(p,q)}
                               + \BigO{L^2}
   \end{equation}
   Now we can use
   \begin{equation}
      2K_{\mu\nu} = \bot\left(n_{\mu;\nu}+n_{\mu;\nu}\right)
   \end{equation}
   to obtain
   \begin{equation}
      (n_\mu(p)-n_\mu(q))v^\mu = K_{\mu\nu} {\tilde v}^\mu {\tilde v}^\nu {\bar L(p,q)}
                               + \BigO{L^2}
   \end{equation}
   Multipling through by ${\bar L}(p,q)$ and using $\Delta x^\mu_{pq} = v^\mu {\bar L}(p,q)$
   we see that (\ref{eqn:main}) can be written as
   \begin{equation}
    {\bar L}^2(p,q) = {\tilde L}^2(p,q)
                    + \frac{1}{12}\left( (n_\mu(p)-n_\mu(q))\Delta x^\mu_{pq}\right)^2
                    + \BigO{L^5}
   \end{equation}
   which completes the proof.
\end{proof}

\begin{corollary}
   Consider a one parameter family of hypersurfaces generated from $S$ by dragging $S$ along
   its unit normal $n$. This family forms a local foliation of $M$ in which the points $p$
   and $q$ are now viewed as functions along the integral curves of $n$. Then
   \begin{equation}
      \label{eqn:corollary2}
      {\bar L}^2(p,q) = {\tilde L}^2(p,q)
                      + \frac{1}{48}\left( \frac{d{\bar L}(p,q)}{dn}\right)^4
                      + \BigO{L^5}
   \end{equation}
   where $n$ (not to be confused with the unit normal) is the arc length measured along the
   integral curves of the unit normal.
\end{corollary}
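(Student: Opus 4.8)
The plan is to read this corollary off the first corollary, \eqref{eqn:corollary1}, by recognising that the normal-difference combination $(n_\mu(p)-n_\mu(q))\Delta x^\mu_{pq}$ appearing there is, up to sign, a derivative of the $(M,g)$ arc-length along the foliation. The only additional tool needed is the classical first variation of arc-length, applied to the one-parameter family of $M$-geodesics whose end-points are swept along the integral curves of the unit normal.

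First I would set up that variation. Because the leaves are generated by dragging $S$ along $n$ and the foliation parameter is arc-length along the integral curves, the two end-points move with unit speed and their end-point variation fields are exactly $n(p)$ and $n(q)$. The connecting curve is a geodesic of $(M,g)$, so the interior term in the first variation vanishes and only the boundary terms survive. Writing $v^\mu$ for the unit tangent of that geodesic (oriented from $p$ towards $q$, and parallel-transported so that its components are constant in the Riemann normal coordinates centred at $p$), the first variation gives
\begin{equation}
   \frac{d{\bar L}(p,q)}{dn} = v^\mu n_\mu(q) - v^\mu n_\mu(p) = -\left(n_\mu(p)-n_\mu(q)\right)v^\mu .
\end{equation}

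Next I would feed this into \eqref{eqn:corollary1}. Using $\Delta x^\mu_{pq}=v^\mu{\bar L}(p,q)$ together with the chain rule ${\bar L}\,(d{\bar L}/dn)=\tfrac12\,d({\bar L}^2)/dn$, the bracketed factor becomes $(n_\mu(p)-n_\mu(q))\Delta x^\mu_{pq} = -{\bar L}\,(d{\bar L}/dn) = -\tfrac12\,d({\bar L}^2)/dn$. Squaring and multiplying by the $\tfrac1{12}$ of \eqref{eqn:corollary1} turns the prefactor into $\tfrac1{12}\cdot\tfrac14=\tfrac1{48}$, delivering the correction $\tfrac1{48}\big(d({\bar L}^2)/dn\big)^2$ (equivalently $\tfrac1{12}\big({\bar L}\,d{\bar L}/dn\big)^2$), with the $\BigO{L^5}$ remainder inherited unchanged from the first corollary. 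I would also note that since $d{\bar L}/dn=\BigO{L}$ the whole correction is $\BigO{L^4}$, consistent with the stated order.

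The main obstacle is purely the careful bookkeeping of the first variation: confirming that the end-point variation fields are the unit normals (unit speed along the integral curves), fixing the orientation of $v$ at $q$ so that the two boundary terms carry the correct relative sign, and checking that the interior and higher-order contributions are genuinely subleading, so that $d{\bar L}/dn$ is captured to the accuracy needed to preserve the $\BigO{L^5}$ error after squaring. Everything else is the algebraic substitution sketched above.
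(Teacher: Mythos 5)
Your argument follows essentially the same route as the paper's own proof: the first variation of arc-length reduces to its boundary terms because the connecting curve is an $(M,g)$-geodesic, the constancy of $v^\mu$ in the Riemann normal coordinates identifies $(n_\mu(p)-n_\mu(q))v^\mu$ with $\pm\,d{\bar L}/dn$ (the sign is immaterial after squaring), and the $\tfrac{1}{12}\cdot\tfrac{1}{4}=\tfrac{1}{48}$ bookkeeping on \eqref{eqn:corollary1} is identical. The one point worth flagging is that what you actually derive, $\tfrac{1}{48}\bigl(d{\bar L}^2/dn\bigr)^2=\tfrac{1}{12}\bigl({\bar L}\,d{\bar L}/dn\bigr)^2$ --- which is also what the paper's proof produces and what its sphere example uses, since there $d{\bar L}/dn={\bar L}$ gives $\tfrac{1}{12}{\bar L}^4$ --- is not literally the $\tfrac{1}{48}\bigl(d{\bar L}/dn\bigr)^4$ printed in \eqref{eqn:corollary2}; the two coincide only when $d{\bar L}/dn=2{\bar L}$, so the displayed statement appears to carry a typo that your derivation silently corrects.
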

\begin{proof}
   The equation for the first variation of arc states that
   \begin{equation}
      \frac{d{\bar L}(p,q)}{dn} = \left[n_\mu v^\mu\right]_p^q
   \end{equation}
   where $v^\mu$ is the unit tangent vector to the geodesic in $(M,g)$. But in our Riemann
   normal coordinates we have $v^\mu_p = v^\mu_q$ and thus we also have
   \begin{equation}
      \frac{d{\bar L}(p,q)}{dn} = \left(n_\mu(p)-n_\mu(q)\right) v^\mu
   \end{equation}
   which leads to
   \begin{equation}
      \frac{d{\bar L}^2(p,q)}{dn} = 2\left(n_\mu(p)-n_\mu(q)\right) \Delta x^\mu_{pq}
   \end{equation}
   Combining this with the previous corollary completes the proof.
\end{proof}

\section{Examples}
\label{sec:examples}

\subsection{Estimating Gaussian curvature}
It is common practice in computer graphics to model a smooth 2-dimensional surfaces such as
a sphere, a torus or even teapots by a finite collection of connected triangles. The
vertices of the triangles are taken as sample points of the smooth surface while the legs
are taken as geodesics of the flat 3-dimensional space in which the surface resides. This
discrete approximation to the smooth surface is commonly known as a triangulation.

One of the more important quantities associated with any 2-dimensional surface is the
Gaussian curvature. This is usually computed by taking various derivatives on a smooth
surface. Yet that is clearly not possible on a triangulation (as a smooth function) since
the local metric is at best piecewise constant. Nonetheless it seems reasonable to expect
that where a triangulation closely approximates a smooth surface then the curvature on the
triangulation should be close to the curvature of the smooth surface. How then can such a
curvature on a triangulation be computed? Various methods
(\cite{%
surazhsky:2003-01,%
petitjean:2002-01,%
meyer:2003-01,%
magid:2007-01,%
liu:2007-01,%
borrelli:2003-01%
})
have been developed over the years that broadly speaking divide into two approaches. In one
approach a smooth surface is interpolated through the vertices which in turn allows the
curvature to be computed using standard methods (see \cite{petitjean:2002-01} for an
extensive review). The other approach uses area weighted sums to estimate the local
curvature (see \cite{meyer:2003-01}).

It is well known that for the case where 4 triangles meet at a vertex the estimated Gaussian
curvature need not converge to the correct value (as the triangulation is refined towards a
continuum limit). Here we shall demonstrate that failure for the simple case of four
identical triangles on a 2-sphere. We will also show that the correct convergent estimate of
the curvature can be recovered by using an adjusted set of leg lengths given by the main
theorem.

The Gaussian curvature on a unit 2-sphere $S^2$ in $E^3$ is 1 everywhere on $S^2$. Consider
now any point $p$ on $S^2$ enclosed by 4 equally spaced points $a$, $b$, $c$ and $d$ also on
$S^2$. This set of points can be connected to form 4 triangles attached to $p$ as indicated
in figure (\ref{fig:sphere}). The Gaussian curvature at $p$ will be estimated by solving the
coupled system of equations
\begin{equation}
   \label{eqn:rncLsq}
   L^2_{ij} = g_{\mu\nu} \Delta x^\mu_{ij} \Delta x^\nu_{ij}
            - \frac {1}{3} R_{\alpha\mu\beta\nu} x^\alpha_i x^\beta_j x^\mu_i x^\nu_j
\end{equation}
for the Riemann normal coordinates $x^\mu_i$ for each vertex $i=p,a,b,c,d$ and the Riemann
components $R_{\alpha\mu\beta\nu}$ at $p$. The $L^2_{ij}$ are the squared arc-lengths
between vertices $i$ and $j$. In the first instance we will take the $L_{ij}$ to be the
Euclidian arc-length given by the embedding of the vertices in $E^3$. Later we will adjust
the $L_{ij}$ by using equation (\ref{eqn:corollary2}).

The symmetry of the 2-sphere allows us to choose all triangles to be identical and to also
choose the Cartesian and Riemann coordinates of each vertex as per table
(\ref{tbl:SphereCoords}). With this choice of coordinates the equations (\ref{eqn:rncLsq})
can be reduced to just two equations, namely
\begin{align}
   {\bar L}^2_{pa} &= {\bar x}^2 + \left({\bar z}-1\right)^2
                    = {\tilde x}^2\label{eqn:sphereXZa}\\
   {\bar L}^2_{ab} &= 2{\bar x}^2
                    = 2{\tilde x}^2 - \frac{1}{3} K {\tilde x}^4\label{eqn:sphereXZb}
\end{align}
where $K=R_{1212}$ is the Gaussian curvature at $p$. The constraint that the points lie on
the unit sphere leads to just one equation
\begin{equation}
   1 = {\bar x}^2 + {\bar z}^2
\end{equation}
Thus we have three equations for four unknowns ${\bar x},\>{\bar z},\>{\tilde x}$ and $K$.
Clearly we can choose $\bar x$ as a free parameter which leads to the following solution for
$K$
\begin{equation}
   K = \frac{3}{2} + \BigO{{\bar x}^2}
\end{equation}
This shows clearly that as the set of points converge to $p$ the estimate for $K$ converges
to the incorrect value of $3/2$. This is a well known result and is not germain to the use
of Riemann normal coordinates (see \cite{xu-xu:2009-01,hildebrandt:2006-01}).

\bgroup
\def\H{\vrule height 14pt depth  7pt width 0pt}
\def\m{\vrule height  0pt depth 10pt width 0pt}
\def\M{\vrule height 15pt depth 10pt width 0pt}
\def\A#1{\hbox to 17pt{\hfill$#1$}}
\def\Z{\A{0}}
\def\U{\A{1}}
\def\tX{\A{{\tilde x}}}
\def\mtX{\A{-{\tilde x}}}
\def\bX{\A{{\bar x}}}
\def\mbX{\A{-{\bar x}}}
\def\z{\A{\bar z}}
\begin{table}[ht]
\begin{center}
\begin{tabular}{ccccccc}
\hline
\H&Vertex&&\multicolumn{3}{c}{Coordinates}&\\
&&&Cartesian&&Riemann&\\
\hline
\M&$p$&&$(\Z,\Z,\U)$    &&$(\Z,\Z)$      \\
\m&$a$&&$(\bX,\Z,\z)$   &&$(\tX,\Z)$      \\
\m&$b$&&$(\Z,\bX,\z)$   &&$(\Z,\tX)$      \\
\m&$c$&&$(\mbX,\Z,\z)$  &&$(\mtX,\Z)$     \\
\m&$d$&&$(\Z,\mbX,\z)$  &&$(\Z,\mtX)$     \\
\hline
\end{tabular}
\end{center}
\caption{The Cartesian and Riemann normal coordinates of the 5 vertices. This choices makes
full use of the known symmetries of the 2-sphere. The size of the triangles is controlled by
the freely chosen coordinate $\bar x$ while $\bar z$ is set by the contraint that the points
lie on the unit 2-sphere. The Riemann normal coordinate $\tilde x$ and the Gaussian
curvature can then be computed from the leg-lengths as described in the text.}
\label{tbl:SphereCoords}
\end{table}
\egroup

\begin{figure}[ht]
\Figure{./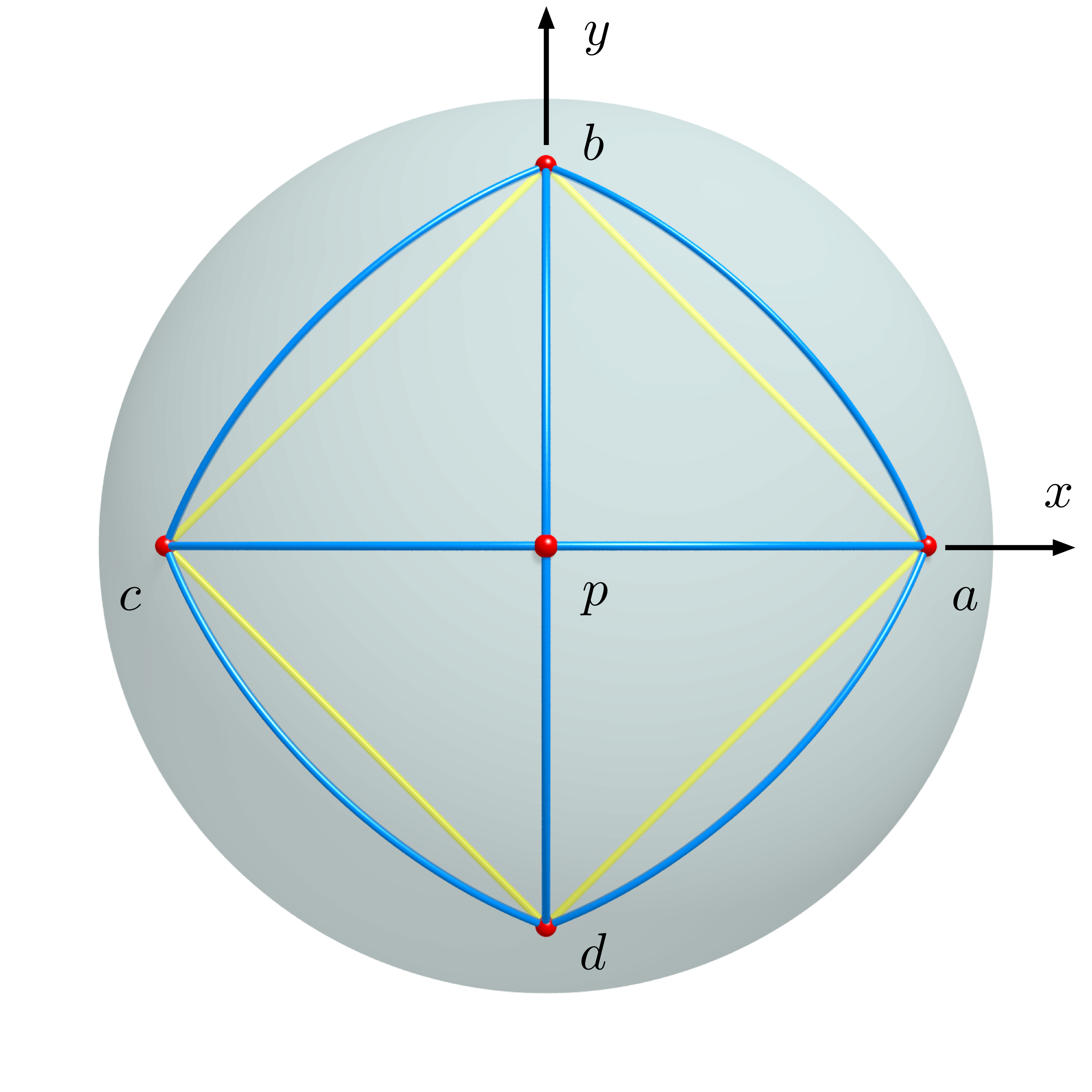}{0.80}
\caption{\normalfont%
A (not so small) patch of triangles on a unit 2-sphere. This is a view looking down the
$z-$axis onto the north pole. The vertices $p,a,b,c$ and $d$ lie on the the various
coordinate planes with Cartesian and Riemann normal coordinates as listed in table
(\ref{tbl:SphereCoords}). The yellow edges are the geodesics in $E^3$ while the blue edges
are the geodesics on the sphere.}
\label{fig:sphere}
\end{figure}

We now repeat the computations but this time using estimates for the geodesic arc-length on
the unit sphere as given by equation (\ref{eqn:corollary2}). For the unit sphere it is easy
to see that
\begin{equation}
   \frac{dL}{dn} = L
\end{equation}
and thus from (\ref{eqn:corollary2}) we find
\begin{equation}
   {\tilde L}^2_{ij} = {\bar L}^2_{ij} + \frac{1}{12}{\bar L}^4_{ij} + \BigO{L^5}
\end{equation}
Using this equation to estimate ${\tilde L}^2_{pa}$ and ${\tilde L}^2_{ab}$ leads to the
following adjusted Riemann normal equations
\begin{align}
   {\tilde L}^2_{pa} &= {\bar L}^2_{pa} + \frac{1}{12} {\bar L}^4_{pa} = {\tilde x}^2\\[5pt]
   {\tilde L}^2_{ab} &= {\bar L}^2_{ab} + \frac{1}{12} {\bar L}^4_{ab} = 2{\tilde x}^2 -
                                                            \frac{1}{3} K {\tilde x}^4
\end{align}
where ${\bar L}^2_{pa}$ and ${\bar L}^2_{ab}$ should be considered as functions of ${\bar
x}$ and ${\bar z}$ given by equations (\ref{eqn:sphereXZa},\ref{eqn:sphereXZb}). Once again
we have three equations for four unknowns. This system can be solved in exactly the same
manner as before to obtain
\begin{equation}
   \label{eqn:betterK}
   K = 1 + \BigO{{\bar x}^2}
\end{equation}
which clearly gives the correct result as ${\bar x}\rightarrow0$.

A reasonable objection to this approach is that in obtaining equation (\ref{eqn:betterK}) we
have made use of known properties of the unit sphere. Thus it should be no surprise that we
get a better result. However we could easily propose a hybrid scheme in which the method
described by Meyer \etal (\cite{meyer:2003-01}) would be used to estimate the normals at the
vertices which in turn would allow us to use equation (\ref{eqn:corollary1}) to estimate
${\tilde L}^2_{ij}$. Numerical experiments on such a hybrid scheme indicates that it can
offer improvements over standard methods. The results will be reported elsewhere.

\subsection{The Schwarz lantern}

There are many ways to triangulate a cylinder such as the Schwarz lantern shown in figure
(\ref{fig:lantern}). This particular triangulation was chosen by Schwarz (see
\cite{morvan:2008-01,dubnov:1963-01}) to provide a simple counter example to a claim that if
all the points of a triangulation converge to a smooth surface (i.e., by creating by more and
more triangles while decreasing their size to zero) then the surface area of the
triangulation would converge to the area of the surface.

We will do the standard computation that establishes error bounds for the area. We will then
repeat the computation but this time using the adjusted arc-lengths given by
(\ref{eqn:corollary1}) yielding an improved estimate for the error bounds.

\begin{figure}[ht]
\Figure{./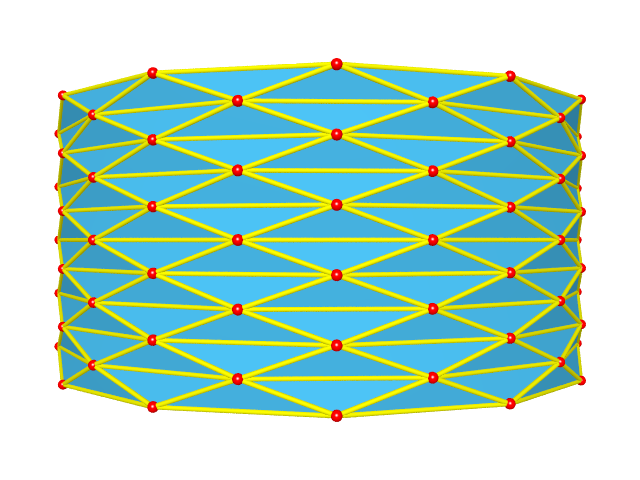}{0.90}
\caption{\normalfont%
An example of a Schwarz lantern built from 220 identical triangles. In the general case the
cylinder is divided into $2M$ horizontal slices and $2N$ vertical slices for a total of
$4NM$ triangles. The example shown here has $N=11$ and $M=5$.}
\label{fig:lantern}
\end{figure}

We begin by first orienting the Cartesian coordinate axes so that the $z-$axis runs up the
centre of the cylinder (see figure (\ref{fig:cylinder})) while the $x-$axis passes through a
vertex $p$ of a typical triangle (recall that all triangles are identical to each other
modulo reflections in the $xy-$plane). The coordinates of the three vertices are shown in
table (\ref{tbl:LanternCoords}). Using standard Euclidian geometry it is easy to show that
\begin{align}
   {\bar L}^2_{pr} &=4\sin^2\left(\frac{\pi}{N}\right)\\[5pt]
   {\bar L}^2_{pq} &=4\sin^2\left(\frac{\pi}{2N}\right) + \frac{1}{4M^2}\\[5pt]
   A^2_{pqr} &= \left(\frac{\pi}{2NM}\right)^2\\[5pt]
   {\bar A}^2_{pqr} &= \frac {1}{16}{\bar L}^2_{pr}\left(4{\bar L}^2_{pq}
                                                         -{\bar L}^2_{pr}\right)
\end{align}
where $A_{pqr}$ is the exact area (i.e., the area of a triangle drawn entirely on the
cylinder) and ${\bar A}_{pqr}$ is the area of the flat triangle with vertices $p,\>q$ and
$r$.

\bgroup
\def\H{\vrule height 14pt depth  7pt width 0pt}
\def\m{\vrule height  0pt depth 10pt width 0pt}
\def\M{\vrule height 15pt depth 10pt width 0pt}
\def\C#1{\hbox to 20pt{\hfill$#1$}}
\def\B#1{\hbox to 35pt{\hfill$#1$}}
\def\Px{\B{1}}
\def\Py{\B{0}}
\def\Pz{\C{0}}
\def\Qx{\B{\cos\frac{\pi}{N}}}
\def\Qy{\B{\sin\frac{\pi}{N}}}
\def\Qz{\C{\frac{1}{2M}}}
\def\Rx{\B{\cos\frac{2\pi}{N}}}
\def\Ry{\B{\sin\frac{2\pi}{N}}}
\def\Rz{\C{0}}
\begin{table}[t]
\begin{center}
\begin{tabular}{ccccc}
\hline
\H&Vertex&&Cartesian Coordinates&\\
\hline
\M&$p$&&$(\Px,\Py,\Pz)$      \\
\m&$q$&&$(\Qx,\Qy,\Qz)$      \\
\m&$r$&&$(\Rx,\Ry,\Rz)$      \\
\hline
\end{tabular}
\end{center}
\caption{The Cartesian coordinates of the 3 vertices on the Schwarz lantern.}
\label{tbl:LanternCoords}
\end{table}
\egroup

\begin{figure}[ht]
\Figure{./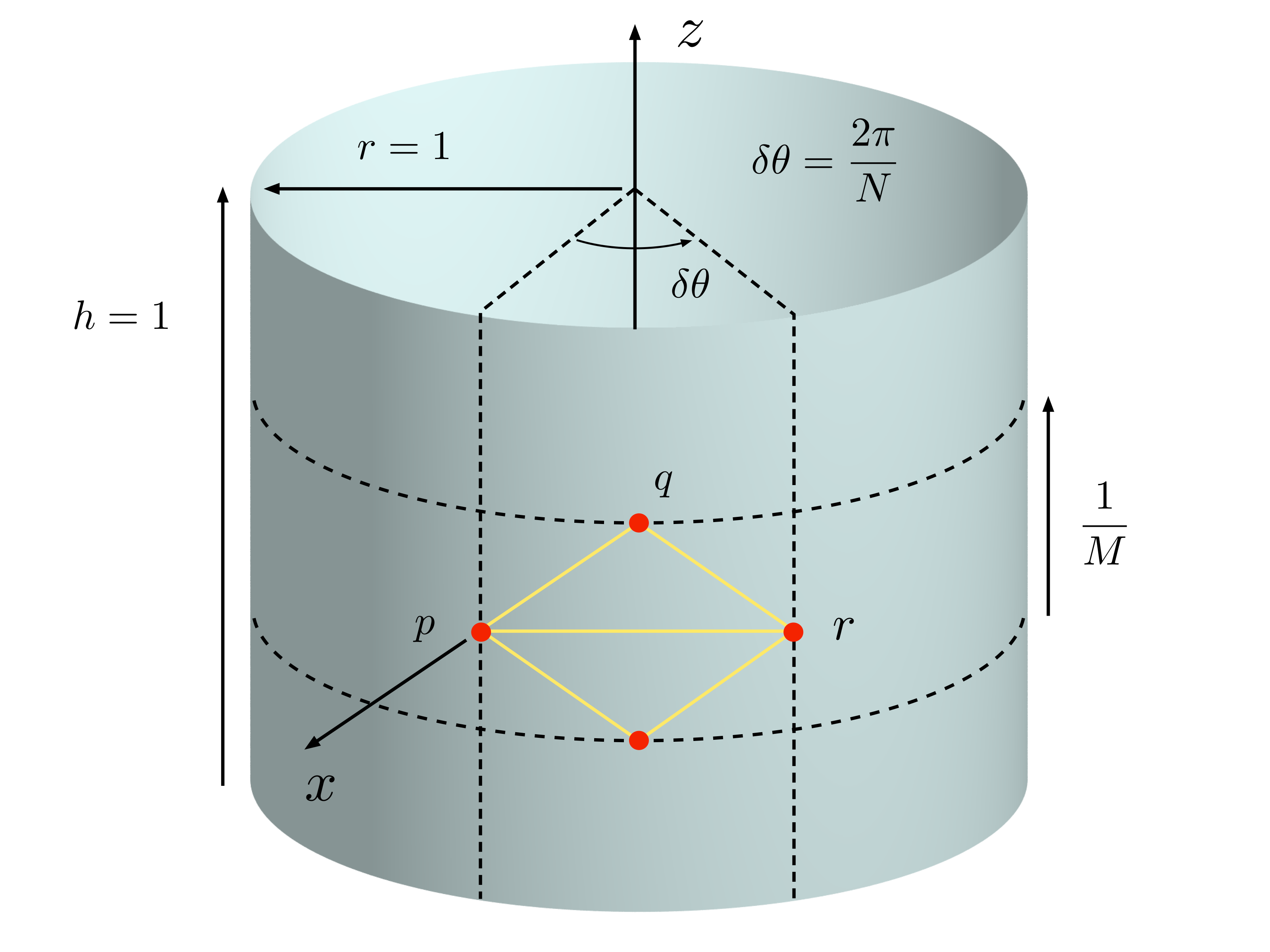}{0.90}
\caption{\normalfont%
A typical pair of triangles in the Schwarz lantern. In the text we compute the area
$A_{pqr}$ of the triangle based on the vertices $p,q$and $r$. The total area of the smooth
cylinder is $S=2\pi$ while the sume of the areas over the triangulation is $4NMA_{pqr}$.}
\label{fig:cylinder}
\end{figure}

It is not hard to show the fractional error in using ${\bar A}^2_{pqr}$ as an approximation
to $A^2_{pqr}$ is subject to the following bounds
\begin{equation}
   \frac{\pi^2}{3N^2}-\frac{\pi^4}{45N^4}\left(2+45M^2\right)
   <
   \frac{A^2_{pqr}-{\bar A}^2_{pqr}}{A^2_{pqr}}
   <
   \frac{\pi^2}{3N^2}
\end{equation}
But since the total area of the triangulation ${\bar S}$ is given by
${\bar S}=4NM{\bar A}_{pqr}$ while the total area of the cylinder is
$S=4NMA_{pqr} = 2\pi$ we see that
\begin{equation}
   \frac{4\pi^4}{3N^2}-\frac{4\pi^6}{45N^4}\left(2+45M^2\right)
   <
   S^2-{\bar S}^2
   <
   \frac{4\pi^4}{3N^2}
\end{equation}
This clearly shows that for the total error to vanish we need not only $N\rightarrow\infty$
but also $(M/N^2)\rightarrow0$. If these two conditions are not satisfied then the total
area of the triangulation need not converge to that of the cylinder (and can even diverge to
infinity).

We will now repeat the above calculation but this time using an adjusted set of $L^2_{ij}$
given by (\ref{eqn:corollary1}). The normal vector for any point on a cylinder of unit
radius is easily computed. For example, for any point on the cylinder with coordinates
$(x,y,z)$ the unit normal vector at that point has components $(x,y,0)$. This allows us to
easily apply equation (\ref{eqn:corollary1}) to estimate the ${\tilde L}_{ij}$ on the
cylinder. Note also that since the cylinder has zero Gaussian curvature we can use the
standard Euclidian formula for the area of triangle. Thus we have
\begin{align}
   {\tilde L}^2_{pr} &= {\bar L}^2_{pr}
                      + \frac{1}{12}\left(n_\mu(p)-n_\mu(r))\Delta x^\mu_{pr}\right)^2\\[5pt]
   {\tilde L}^2_{pq} &= {\bar L}^2_{pq}
                      + \frac{1}{12}\left(n_\mu(p)-n_\mu(q))\Delta x^\mu_{pq}\right)^2\\[5pt]
   {\tilde A}^2_{pqr} &= \frac {1}{16}{\tilde L}^2_{pr}\left(4{\tilde L}^2_{pq}
                                                             -{\tilde L}^2_{pr}\right)
\end{align}
With these estimates for ${\tilde L}^2_{ij}$ we find the following error bounds for the total
area
\begin{equation}
   \frac{32\pi^6}{45N^4}-\frac{8\pi^8}{189N^6}\left(6+63M^2\right)
   <
   S^2-{\tilde S}^2
   <
   \frac{32\pi^6}{45N^4}
\end{equation}
where ${\tilde S}$ is the total area of the triangulation (using the adjusted arc-lengths).
As with the previous example, ${\tilde S}$ will converge to $S$ only when
$N\rightarrow\infty$ while $M/N^2\rightarrow0$. But note that when $M/N^2\rightarrow0$ the
errors bounds are of order $\BigO{N^{-4}}$ whereas in the previous example (using Euclidian
arc-lengths) the errors were of order $\BigO{N^{-2}}$. This is a considerable improvement.


\begin{thebibliography}{10}

\bibitem{surazhsky:2003-01}
T.~Surazhsky, E.~Magid, O.~Soldea, G.~Elber, and E.~Rivlin,
  \href{http://dx.doi.org/10.1109/ROBOT.2003.1241726}{A comparison of gaussian
  and mean curvatures estimation methods on triangular meshes,} in {\em
  Robotics and Automation, 2003. Proceedings. ICRA '03. IEEE International
  Conference on}, vol.~1, pp.~1021--1026 vol.1.
\newblock Sept, 2003.

\bibitem{petitjean:2002-01}
S.~Petitjean, A survey of methods for recovering quadrics in triangle meshes,
  \href{http://dx.doi.org/10.1145/508352.508354}{{\em ACM Comput. Surv.} {\bf
  34} (2002) no.~2, 211--262}. \url{http://doi.acm.org/10.1145/508352.508354}.

\bibitem{meyer:2003-01}
M.~Meyer, M.~Desbrun, P.~Schr{\"o}der, and A.~Barr,
  \href{http://dx.doi.org/10.1007/978-3-662-05105-4_2}{Discrete
  differential-geometry operators for triangulated 2-manifolds,} in {\em
  Visualization and Mathematics III}, H.-C. Hege and K.~Polthier, eds.,
  Mathematics and Visualization, pp.~35--57.
\newblock Springer Berlin Heidelberg, 2003.
\newblock \url{http://dx.doi.org/10.1007/978-3-662-05105-4_2}.

\bibitem{magid:2007-01}
E.~Magid, O.~Soldea, and E.~Rivlin, A comparison of gaussian and mean curvature
  estimation methods on triangular meshes of range image data,
  \href{http://dx.doi.org/http://dx.doi.org/10.1016/j.cviu.2006.09.007}{{\em
  Computer Vision and Image Understanding} {\bf 107} (2007) no.~3, 139 -- 159}.
  \url{http://www.sciencedirect.com/science/article/pii/S1077314206001585}.

\bibitem{liu:2007-01}
D.~Liu and G.~Xu, Angle deficit approximation of gaussian curvature and its
  convergence over quadrilateral meshes,
  \href{http://dx.doi.org/http://dx.doi.org/10.1016/j.cad.2007.01.007}{{\em
  Computer-Aided Design} {\bf 39} (2007) no.~6, 506 -- 517}.
  \url{http://www.sciencedirect.com/science/article/pii/S0010448507000267}.

\bibitem{borrelli:2003-01}
V.~Borrelli, F.~Cazals, and J.-M. Morvan, On the angular defect of
  triangulations and the pointwise approximation of curvatures,
  \href{http://dx.doi.org/http://dx.doi.org/10.1016/S0167-8396(03)00077-3}{{\em
  Computer Aided Geometric Design} {\bf 20} (2003) no.~6, 319 -- 341}.
  \url{http://www.sciencedirect.com/science/article/pii/S0167839603000773}.

\bibitem{xu-xu:2009-01}
Z.~Xu and G.~Xu, Discrete schemes for gaussian curvature and their convergence,
  \href{http://dx.doi.org/http://dx.doi.org/10.1016/j.camwa.2009.01.024}{{\em
  Computers \& Mathematics with Applications} {\bf 57} (2009) no.~7, 1187 --
  1195}.
  \url{http://www.sciencedirect.com/science/article/pii/S0898122109000480}.

\bibitem{hildebrandt:2006-01}
K.~Hildebrandt, K.~Polthier, and M.~Wardetzky, On the convergence of metric and
  geometric properties of polyhedral surfaces,
  \href{http://dx.doi.org/10.1007/s10711-006-9109-5}{{\em Geometriae Dedicata}
  {\bf 123} (2006) no.~1, 89--112}.
  \url{http://dx.doi.org/10.1007/s10711-006-9109-5}.

\bibitem{morvan:2008-01}
J.-M. Morvan, {\em Generalized Curvatures}.
\newblock Springer Publishing Company, Incorporated, 1~ed., 2008.

\bibitem{dubnov:1963-01}
Y.~S. Dubnov, {\em {Mistakes in Geometric Proofs}}.
\newblock D. C. Heath and company, Boston, 1963.

\end{thebibliography}

\providecommand{\href}[2]{#2}\begingroup\raggedright\endgroup

\end{document}